\newtheorem{theorem}{Theorem}[section]
\newtheorem{lemma}{Lemma}[section]
\theoremstyle{definition}
\newtheorem{definition}{Definition}[section]
\theoremstyle{remark}
\newtheorem{remark}{Remark}[section]
\theoremstyle{proposition}
\newtheorem{proposition}{Proposition}[section]
\numberwithin{equation}{section}
\theoremstyle{corollary}
\newtheorem{corollary}{Corollary}[section]
\begin{document}

\title{The Dixmier-Douady class in the Simplicial\\
 de Rham Complex}
\author{ Naoya Suzuki}
\date{}
\maketitle
\begin{abstract}
On the basis of A. L. Carey, D. Crowley, M. K. Murray's work, we exhibit a cocycle in the simplicial de Rham complex
which represents the Dixmier-Douady class.
\end{abstract}

\section{Introduction}

In \cite[Carey,Crowley,Murray]{Car},  they proved that when  a Lie group $G$ admits a central extension $1 \rightarrow U(1) \rightarrow \widehat{G} \rightarrow G \rightarrow 1$, there exists a  characteristic class of principal $G$-bundle $\pi :Y \rightarrow M $ which belongs to a cohomology group $H^2 (M , \underline{U(1)} ) \cong H^3 (M , \mathbb{Z} )$. Here $\underline{U(1)}$ stands for a sheaf of continuous $U(1)$-valued functions on $M$.
 This class is called a Dixmier-Douady class associated to the
central extension $\widehat{G} \rightarrow G$. 

On the other hand,  we have a simplicial manifold $\{ NG(*) \}$ for any Lie group $G$. It is a sequence of manifolds $\{ NG(p) =G^p \}_{ p=0,1, \cdots }$ together
with face maps ${\varepsilon}_{i} : NG(p) \rightarrow NG(p-1)  $ for $ i= 0, \cdots , p $ satisfying relations the ${\varepsilon}_{i}{\varepsilon}_{j} ={\varepsilon}_{j-1}{\varepsilon}_{i}$ for $i<j$.(The standard definition also involves 
degeneracy maps but we do not need them here.)
Then the $n$-th cohomology group of classifying space $BG$ is isomorphic to the total cohomology 
of a double complex $ \{ {\Omega}^{q} (NG(p)) \}_{p+q=n} $. See  \cite{Bot2} \cite{Dup2} \cite{Mos}  for details.
 
In this paper we will exhibit a cocycle on ${\Omega}^{*} (NG(*)) $ which represents the
Dixmier-Douady class due to Carey, Crowley, Murray. Such a cocycle is also studied in a general setting  by
K. Behrend, J.-L. Tu, P. Xu and C. Laurent-Gengoux \cite{Beh} \cite{Beh2} \cite{Tu} \cite{Tu2}, and G. Ginot, M. Sti\'{e}non \cite{Gin}
but our construction of the cocycle is different from theirs, and the proof is more simple. 
Stevenson \cite{Ste} also exhibited a cocycle which represents the
Dixmier-Douady class  in singular 
cohomology group instead of the de Rham cohomology. 
As a consequence of our result, we can show that if $G$ is given a discrete topology, the Dixmier-Douady class  in $H^3 (BG^{\delta} , \mathbb{R} )$ is $0$. Furthermore, we can exhibit
the ``Chern-Simons form" of Dixmier-Douady class on ${\Omega}^{*} (N \bar{G}(*)) $. Here $N \bar{G}$ is a simplicial manifold
which plays the role of universal bundle.

The outline is as follows. In section 2, we briefly recall the notion of simplicial manifold $NG$
and construct a cocycle in ${\Omega}^{*} (NG(*)) $.
In section 3, we recall the definition of a Dixmier-Douady class and prove the main theorem. In section 4,
we give the Chern-Simons form of the Dixmier-Douady class.

\section{  Cocycle on the double complex}
In this section first we recall the relation between the simplicial manifold $NG$ and the classifying space $BG$, then we construct the cocycle on $\Omega ^{*,*}(NG) $.

\subsection{The double complex on simplicial manifold}

For any Lie group $G$ , we define simplicial manifolds $NG$, $N \bar{G}$ and a simplicial $G$-bundle  $\gamma : N \bar{G} \rightarrow NG$
as follows:\\
\par
$NG(p)  = \overbrace{G \times \cdots \times G }^{p-times}  \ni (g_1 , \cdots , g_p ) :$  \\
face operators \enspace ${\varepsilon}_{i} : NG(p) \rightarrow NG(p-1)  $
$$
{\varepsilon}_{i}(g_1 , \cdots , g_p )=\begin{cases}
(g_2 , \cdots , g_p )  &  i=0 \\
(g_1 , \cdots ,g_i g_{i+1} , \cdots , g_p )  &  i=1 , \cdots , p-1 \\
(g_1 , \cdots , g_{p-1} )  &  i=p
\end{cases}
$$

\par
\medskip
$N \bar{G} (p) = \overbrace{ G \times \cdots \times G }^{p+1 - times} \ni (h_1 , \cdots , h_{p+1} ) :$ \\
face operators \enspace $ \bar{\varepsilon}_{i} : N \bar{G}(p) \rightarrow N \bar{G}(p-1)  $ 
$$ \bar{{\varepsilon}} _{i} (h_1 , \cdots , h_{p+1} ) = (h_1 , \cdots , h_{i} , h_{i+2}, \cdots , h_{p+1})  \qquad i=0,1, \cdots ,p $$

\par
\medskip

And we define $\gamma : N \bar{G} \rightarrow NG $ as $ \gamma (h_1 , \cdots , h_{p+1} ) = (h_1 {h^{-1} _2} , \cdots , h_{p} {h^{-1} _{p+1}} )$.\\
\par
To any simplicial manifold $X = \{ X_* \}$, we can associate a topological space $\parallel X \parallel $ 
called the fat realization. 
Since any $G$-bundle $\pi : E \rightarrow M$ can be realized as the pull-back of the fat realization of $\gamma $,
$\parallel \gamma \parallel$ is an universal bundle $EG \rightarrow BG$ \cite{Seg}. \\

Now we construct a double complex associated to a simplicial manifold.

\begin{definition}
For any simplicial manifold $ \{ X_* \}$ with face operators $\{ {\varepsilon}_* \}$, we define double complex as follows:
$${\Omega}^{p,q} (X) \buildrel \mathrm{def} \over = {\Omega}^{q} (X_p) $$
Derivatives are:
$$ d' := \sum _{i=0} ^{p+1} (-1)^{i} {\varepsilon}_{i} ^{*}  , \qquad  d'' := {\rm derivatives \enspace on \enspace } {X_p } \times (-1)^{p} $$
\end{definition}
\hspace{30em} $ \Box $ \\
For $NG$ and $N \bar{G} $ the following holds(\cite{Bot2}\cite{Dup2}\cite{Mos}).

\begin{theorem}
{There exist ring isomorphisms }

$$ H({\Omega}^{*} (NG))  \cong  H^{*} (BG ), \qquad  H({\Omega}^{*} (N \bar{G})) \cong H^{*} (EG ) $$

{ Here} ${\Omega}^{*} (NG)$ { and } ${\Omega}^{*} (N \bar{G})$ { mean the total complexes}.
\end{theorem}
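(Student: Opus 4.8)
The plan is to prove this by exhibiting a quasi-isomorphism between the total complex of the double complex and a complex computing the cohomology of the fat realization, then invoking the standard identification $\|NG\| \simeq BG$ (and $\|N\bar G\| \simeq EG$). First I would recall the general machinery: for any simplicial manifold $X_*$, there is a first-quadrant double complex $E_0^{p,q} = \Omega^q(X_p)$ with the two differentials $d''$ (the de Rham differential on each $X_p$, suitably signed) and $d' = \sum_i (-1)^i \varepsilon_i^*$ (the alternating sum of pullbacks along face maps). Filtering by $p$ gives a spectral sequence whose $E_1$-term is $E_1^{p,q} = H^q_{dR}(X_p)$ with differential induced by $d'$; filtering by $q$ and using that each $X_p$ is a manifold (so the de Rham complex resolves the constant sheaf) gives the other spectral sequence, whose $E_1$-term is the simplicial cochain complex $p \mapsto H^q(X_p;\mathbb R)$ — but more usefully, $E_1^{p,q}$ collapses to the cohomology of the "simplicial" direction applied to a point-wise resolution.

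Next I would make the comparison with the fat realization explicit. The key input is that for a simplicial manifold $X_*$, the de Rham-theoretic double complex computes $H^*(\|X_*\|;\mathbb R)$: this is Bott–Shulman–Stasheff / Dupont, and the cleanest route is to observe that $\|X_*\|$ carries a CW- or stratified structure whose $E_1$-page (from the skeletal filtration) matches the $d'$-cohomology of $p \mapsto H^q_{dR}(X_p)$ after using ordinary de Rham theory on each $X_p$. Concretely, one shows both spectral sequences have the same $E_2$-page, namely $H^p_{d'}(H^q_{dR}(X_\bullet))$, and that this is isomorphic to $H^{p+q}(\|X_*\|;\mathbb R)$ by comparing with the Čech–de Rham or singular model. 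For the ring structure, I would note that the wedge product of forms on each $X_p$, combined with the Alexander–Whitney-type map on the simplicial side, induces a product on the total complex that is compatible (up to the usual homotopy) with the cup product on $H^*(\|X_*\|)$; on $E_1$ this is just the wedge product, which is graded-commutative, and the multiplicativity of the comparison map follows from naturality.

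Then I would specialize: $\|NG\|$ is a model for $BG$ (Segal) and $\|N\bar G\|$ is a model for $EG$ — indeed the excerpt already records that $\|\gamma\|: \|N\bar G\| \to \|NG\|$ is the universal bundle $EG \to BG$ — so substituting gives the stated isomorphisms $H(\Omega^*(NG)) \cong H^*(BG)$ and $H(\Omega^*(N\bar G)) \cong H^*(EG)$, and these are ring isomorphisms by the previous paragraph. (As a sanity check, $H^*(EG)$ is trivial, and correspondingly the double complex for $N\bar G$ should be acyclic above degree $0$; this follows because $N\bar G$ admits an extra degeneracy / simplicial contraction coming from inserting a basepoint coordinate, which makes the $d'$-complex contractible.)

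The main obstacle I expect is not any single computation but rather setting up the comparison map cleanly and proving it is a quasi-isomorphism: one must either build an explicit cochain map from the total de Rham complex to singular cochains on $\|X_*\|$ (via integration over simplices à la Dupont, i.e. the "simplicial de Rham theorem") and check it commutes with both differentials and respects products, or run a double-spectral-sequence comparison and verify convergence (automatic here since the double complex is first-quadrant and the skeletal filtration of $\|X_*\|$ is finite in each degree). Since this is the well-known Bott–Shulman–Stasheff theorem, in the actual write-up I would simply cite \cite{Bot2}, \cite{Dup2}, \cite{Mos} for the statement and proof rather than reproduce it, and devote the remaining effort only to fixing the sign conventions ($d''$ twisted by $(-1)^p$) so that $d'd'' + d''d' = 0$ and the total differential squares to zero.
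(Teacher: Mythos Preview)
Your proposal is correct, and in the end it matches the paper exactly: the paper offers no proof of this theorem at all, merely stating it as a known result with references to \cite{Bot2}, \cite{Dup2}, \cite{Mos}, which is precisely what you conclude you would do. The sketch you give of the Bott--Shulman--Stasheff/Dupont argument is accurate but goes beyond what the paper provides.
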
 

\hspace{30em} $ \Box $ \\

For a principal $G$-bundle $Y \rightarrow M$ and an open covering $\{ U_{\alpha } \}$ of $M$, the transition functions 
$ (g_{\alpha _0 \alpha _1},g_{\alpha _1 \alpha _2}, \cdots ,
g_{\alpha _{p-1} \alpha _p}):U_{\alpha _0 \alpha _1 \cdots \alpha _p} \rightarrow NG(p)$ induce the cohomology map 
$H^* (NG) \rightarrow H^* _{\check{C}ech-de Rham} (M)$. The elements in the image are the characteristic class of $Y$ \cite{Mos}.

\subsection{Construction of the cocycle }
Let $\rho : \widehat{G} \rightarrow G $ be a central extension of a Lie group $G$ and we recognize it as a $U(1)$-bundle. Using the face operators $\{ {\varepsilon}_{i} \}: NG(2) \rightarrow NG(1) =G $, 
we can construct the $U(1)$-bundle over $NG(2)= G \times G $ as $\delta \widehat{G} :=  {\varepsilon _0}^* \widehat{G} \otimes ({{\varepsilon} _1}^* \widehat{G})^{{\otimes}-1} \otimes
{{\varepsilon} _2}^* \widehat{G} $.
Here we define the tensor product $S \otimes T$ of $U(1)$-bundles $S$ and $T$ over $M$ as
$$ S \otimes T := \bigcup _{x \in M} (S_x \times T_x) / (s,t) \sim (su,tu^{-1}), (u \in U(1))$$

\par
\begin{lemma}
 $\delta \widehat{G} \rightarrow G \times G $ { is a trivial bundle.}
\end{lemma}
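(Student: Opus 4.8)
The plan is to exhibit an explicit global smooth section of $\delta\widehat{G}\to G\times G$; since a $U(1)$-bundle admitting a global section is trivial, this proves the lemma. First I would unwind the definition. Writing $\widehat{G}_g=\rho^{-1}(g)$ for the fibre of the central extension, the face maps $\varepsilon_0(g_1,g_2)=g_2$, $\varepsilon_1(g_1,g_2)=g_1g_2$, $\varepsilon_2(g_1,g_2)=g_1$ show that the fibre of $\delta\widehat{G}$ over $(g_1,g_2)$ is $\widehat{G}_{g_2}\otimes(\widehat{G}_{g_1g_2})^{-1}\otimes\widehat{G}_{g_1}$, where $(-)^{-1}$ denotes the inverse (dual) line. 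The key point is that the multiplication of $\widehat{G}$ covers that of $G$, so for any lifts $\widehat{g_1}\in\widehat{G}_{g_1}$, $\widehat{g_2}\in\widehat{G}_{g_2}$ the product $\widehat{g_1}\,\widehat{g_2}$ lies in $\widehat{G}_{g_1g_2}$. I would then propose
\[
s(g_1,g_2):=\widehat{g_2}\otimes(\widehat{g_1}\,\widehat{g_2})^{-1}\otimes\widehat{g_1}.
\]

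The substantive point is that $s$ is independent of the chosen lifts, and this is precisely where the centrality of $U(1)\subset\widehat{G}$ is used. Replacing $\widehat{g_1}$ by $\widehat{g_1}u$ and $\widehat{g_2}$ by $\widehat{g_2}v$ with $u,v\in U(1)$, centrality gives $\widehat{g_1}u\,\widehat{g_2}v=\widehat{g_1}\,\widehat{g_2}\,uv$, hence $(\widehat{g_1}u\,\widehat{g_2}v)^{-1}=(\widehat{g_1}\,\widehat{g_2})^{-1}(uv)^{-1}$ in the inverse line. Transporting the three $U(1)$-factors $v$, $(uv)^{-1}$, $u$ sitting in the three tensor slots to a common slot via the defining relation $(s,t)\sim(su,tu^{-1})$, their net contribution is $v\cdot(uv)^{-1}\cdot u=1$, so $s(g_1,g_2)$ is unchanged. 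Smoothness is then automatic: over a neighbourhood of any point of $G$ one chooses a smooth local section of $\rho:\widehat{G}\to G$, substitutes the corresponding lifts into the formula for $s$, and the resulting locally defined sections of $\delta\widehat{G}$ agree on overlaps by the well-definedness just established, hence patch to a global smooth section.

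I expect the only delicate part to be the bookkeeping of the $U(1)$-actions — keeping the conventions $(\ell u)^{-1}=\ell^{-1}u^{-1}$ for the inverse line and $(s,t)\sim(su,tu^{-1})$ for the tensor product consistent — after which the cancellation $v\cdot(uv)^{-1}\cdot u=1$ is forced by commutativity of $U(1)$. Equivalently, one could argue that $c_1(\delta\widehat{G})=\varepsilon_0^{*}c_1(\widehat{G})-\varepsilon_1^{*}c_1(\widehat{G})+\varepsilon_2^{*}c_1(\widehat{G})$ vanishes because the extension corresponds to a multiplicative line bundle on $G$; but the explicit section $s$ is preferable here, since it is what one differentiates in the next step to write down the cocycle.
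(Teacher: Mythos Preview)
Your proof is correct and is essentially the paper's own argument: the paper constructs the bundle isomorphism $f:\varepsilon_0^*\widehat{G}\otimes\varepsilon_2^*\widehat{G}\to\varepsilon_1^*\widehat{G}$, $(\hat g_2,\hat g_1)\mapsto\hat g_1\hat g_2$, which is exactly the datum of your global section $s(g_1,g_2)=\widehat{g_2}\otimes(\widehat{g_1}\widehat{g_2})^{-1}\otimes\widehat{g_1}$. Your treatment is in fact more careful, since you explicitly verify the independence of lifts via centrality of $U(1)$, which the paper leaves implicit.
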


\begin{proof}
We can construct a bundle isomorphism 
$f:{{\varepsilon}_0} ^* \widehat{G} \otimes {{\varepsilon}_2} ^* \widehat{G} \rightarrow
{{\varepsilon}_1} ^* \widehat{G} $ as follows. First we define
$ f$ to be the map sending $[((g_1,g_2),\hat{g}_2 ),((g_1,g_2),\hat{g}_1 )] $s.t. $\rho ( \hat{g}_2 )=g_2,
\rho ( \hat{g}_1 )=g_1 $ to $((g_1,g_2),\hat{g}_1 \hat{g}_2) $. Then we have the inverse
$ f^{-1} $ that sends $ ((g_1,g_2), \hat{g})$ s.t. $\rho ( \hat{g} )=g_1g_2$ to $[((g_1,g_2),\hat{g}_2 ),((g_1,g_2),\hat{g} \hat{g}^{-1} _2 )]$ s.t. 
$\rho ( \hat{g}_2 )=g_2 $
\end{proof}

\bigskip

For any connection $\theta$ on $\widehat{G}$, there is the induced connection $\delta \theta $ on $\delta \widehat{G} $ \cite[Brylinski]{Bry}.

\begin{proposition}
{Let} $c_1 (\theta )$ denote the 2-form on $G$ which hits $\left( \frac{-1}{2 \pi i} \right) d \theta \in \Omega ^2 (\widehat{G})$ by $\rho ^{*}$, and $\hat{s}$ any global section of
$\delta \widehat{G} $. Then the following equation holds.
$$ ({\varepsilon}_{0} ^* - {\varepsilon}_{1} ^* + {\varepsilon}_{2} ^* ) c_1 (\theta ) = \left( \frac{-1}{2 \pi i} \right)d(\hat{s}^{*} (\delta \theta )) \enspace \in \Omega ^2 (NG(2)) .$$ 
\end{proposition}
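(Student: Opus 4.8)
The plan is to interpret both sides of the asserted identity as the curvature $2$-form of the induced connection $\delta\theta$ on $\delta\widehat{G}$, read off on the base $NG(2)$, up to the universal factor $\tfrac{-1}{2\pi i}$.

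First I would recall the elementary facts about $U(1)$-connections that are needed. Since $\rho:\widehat{G}\to G$ is a principal $U(1)$-bundle and the structure group is abelian, the curvature $d\theta$ is horizontal and invariant, hence $d\theta=\rho^{*}\Omega_{\theta}$ for a unique $\Omega_{\theta}\in\Omega^{2}(G)$; comparing with the definition of $c_{1}(\theta)$ gives $\Omega_{\theta}=-2\pi i\,c_{1}(\theta)$, so $\varepsilon_{i}^{*}c_{1}(\theta)=\tfrac{-1}{2\pi i}\,\varepsilon_{i}^{*}\Omega_{\theta}$ for $i=0,1,2$. Next I would use functoriality of this descended curvature: the pullback bundle $\varepsilon_{i}^{*}\widehat{G}\to NG(2)$ with its pullback connection $\varepsilon_{i}^{*}\theta$ has curvature descending to $\varepsilon_{i}^{*}\Omega_{\theta}$; and for $U(1)$-bundles with connection, the tensor product connection has curvature the sum of the curvatures while the inverse bundle has curvature the negative. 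Since $\delta\theta$ is by construction (\cite{Bry}) exactly the tensor-product/inverse connection on $\delta\widehat{G}=\varepsilon_{0}^{*}\widehat{G}\otimes(\varepsilon_{1}^{*}\widehat{G})^{\otimes-1}\otimes\varepsilon_{2}^{*}\widehat{G}$, its curvature $d(\delta\theta)$ descends to
$$\Omega_{\delta\theta}=\varepsilon_{0}^{*}\Omega_{\theta}-\varepsilon_{1}^{*}\Omega_{\theta}+\varepsilon_{2}^{*}\Omega_{\theta}=-2\pi i\,(\varepsilon_{0}^{*}-\varepsilon_{1}^{*}+\varepsilon_{2}^{*})c_{1}(\theta)\ \in\ \Omega^{2}(NG(2)).$$

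Finally I would bring in the section. By the preceding lemma $\delta\widehat{G}$ is trivial, so a global section $\hat{s}$ exists; for any such $\hat{s}$ the $1$-form $\hat{s}^{*}(\delta\theta)$ is globally defined on $NG(2)$, and because $d(\delta\theta)$ is basic and the bundle projection composed with $\hat{s}$ is the identity, $d(\hat{s}^{*}(\delta\theta))=\hat{s}^{*}(d(\delta\theta))=\Omega_{\delta\theta}$ --- in particular this is independent of the choice of $\hat{s}$. Multiplying the displayed equality by $\tfrac{-1}{2\pi i}$ then gives $\tfrac{-1}{2\pi i}d(\hat{s}^{*}(\delta\theta))=(\varepsilon_{0}^{*}-\varepsilon_{1}^{*}+\varepsilon_{2}^{*})c_{1}(\theta)$, as claimed. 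The one step that requires care is the middle one: one must check that the induced connection of \cite{Bry} genuinely has curvature $\varepsilon_{0}^{*}\Omega_{\theta}-\varepsilon_{1}^{*}\Omega_{\theta}+\varepsilon_{2}^{*}\Omega_{\theta}$, i.e.\ keep the $2\pi i$ normalization consistent through pullback, tensor product and dualization; the rest is Chern--Weil functoriality for line bundles together with the trivial observation that the pullback of a basic form along a section does not depend on the section.
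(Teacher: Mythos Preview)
Your proof is correct, and the underlying idea is the same as the paper's --- both sides of the identity are the curvature of $\delta\theta$ read off on the base $NG(2)$ --- but the execution is genuinely different. The paper proceeds locally: it chooses an open cover $\{V_\lambda\}$ of $G$ with local sections $\eta_\lambda:V_\lambda\to\widehat{G}$, pulls these back to an open cover of $G\times G$ with induced local sections $\varepsilon_0^*\eta_\lambda\otimes(\varepsilon_1^*\eta_{\lambda'})^{\otimes-1}\otimes\varepsilon_2^*\eta_{\lambda''}$ of $\delta\widehat{G}$, and then verifies the identity on each open set by computing both $c_1(\theta)$ and the pullback of $\delta\theta$ explicitly in terms of the $\eta_\lambda^*\theta$. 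You instead argue globally, invoking only that the curvature of a $U(1)$-connection is basic, that it is natural under pullback, and that it is additive under tensor product and negated under dualization; this gives the identity in one stroke without ever trivializing locally. Your route is cleaner and makes the independence from $\hat{s}$ transparent; the paper's route is more self-contained, not relying on general statements about tensor-product connections.
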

\begin{proof}
Choose an open cover $\mathcal{V} = \{ V_{\lambda} \}_{\lambda \in \Lambda}$ of $G$ such that there exist local sections $ {\eta}_{\lambda} : V_{\lambda} \rightarrow \widehat{G}$ of $\rho$.
Then  $ \{ {\varepsilon}_{0} ^{-1} (V_{\lambda} ) \cap {\varepsilon}_{1} ^{-1} (V_{\lambda '} ) \cap {\varepsilon}_{2} ^{-1} (V_{\lambda ''}) \}_{\lambda , {\lambda}' , {\lambda}'' \in \Lambda}  $ is an open cover of $G \times G$ and there are the induced local
sections ${\varepsilon}_{0} ^* \eta _{\lambda} \otimes ({\varepsilon}_{1} ^* \eta _{\lambda '})^{\otimes -1} \otimes {\varepsilon}_{2} ^* \eta _{\lambda ''} $ on that covering.

If we pull back $\delta \theta $ by these sections, the induced form on ${\varepsilon}_{0} ^{-1} (V_{\lambda} ) \cap {\varepsilon}_{1} ^{-1} (V_{\lambda '} ) \cap {\varepsilon}_{2} ^{-1} (V_{\lambda ''})$ is ${\varepsilon}_{0} ^* (\eta _{\lambda} ^* \theta )- {\varepsilon}_{1} ^* (\eta _{\lambda '} ^* \theta )+ {\varepsilon}_{2} ^* ( \eta _{\lambda ''} ^* \theta )$.
 We restrict $({\varepsilon}_{0} ^* - {\varepsilon}_{1} ^* + {\varepsilon}_{2} ^* ) c_1 (\theta )$ on ${\varepsilon}_{0} ^{-1} (V_{\lambda} ) \cap {\varepsilon}_{1} ^{-1} (V_{\lambda '} ) \cap {\varepsilon}_{2} ^{-1} (V_{\lambda ''})$ then it is equal to 
 $ \left( \frac{-1}{2 \pi i} \right)d({\varepsilon}_{0} ^* (\eta _{\lambda} ^* \theta )- {\varepsilon}_{1} ^* (\eta _{\lambda '} ^* \theta )+ {\varepsilon}_{2} ^* ( \eta _{\lambda ''} ^* \theta ))$, because $c_1 (\theta ) = \sum  \left( \frac{-1}{2 \pi i} \right) d( \eta _{\lambda} ^* \theta )$.\\
Also $d({\varepsilon}_{0} ^* (\eta _{\lambda} ^* \theta )- {\varepsilon}_{1} ^* (\eta _{\lambda '} ^* \theta )+ {\varepsilon}_{2} ^* ( \eta _{\lambda ''} ^* \theta )) = d(\hat{s}^{*} (\delta \theta ))|_{{\varepsilon}_{0} ^{-1} (V_{\lambda} ) \cap {\varepsilon}_{1} ^{-1} (V_{\lambda '} ) \cap {\varepsilon}_{2} ^{-1} (V_{\lambda ''})}$
since $\delta \theta $ is a connection form. This completes the proof.
\end{proof}

\begin{proposition}
{ For the face operators } $  \{{\varepsilon}_{i} \}_{i=0,1,2,3}: NG(3) \rightarrow NG(2)$,\\
$$({\varepsilon}_{0} ^* - {\varepsilon}_{1} ^* + {\varepsilon}_{2} ^* - {\varepsilon}_{3} ^*)(\hat{s}^{*} (\delta \theta ))=0.$$
\end{proposition}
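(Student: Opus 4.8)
The plan is to exploit the simplicial identities among the face operators together with the triviality established in the previous propositions, rather than to compute with local sections again. First I would observe that the four composite maps $\varepsilon_i \colon NG(3) \to NG(2)$, followed by the three maps $\varepsilon_j \colon NG(2) \to NG(1) = G$, satisfy the simplicial relations $\varepsilon_j \varepsilon_i = \varepsilon_i \varepsilon_{j-1}$ for $i < j$. Pulling back the bundle $\widehat{G} \to G$ along all of these, one gets that the alternating tensor product
$$ \bigotimes_{i=0}^{3} \bigl( \varepsilon_i^* \delta\widehat{G} \bigr)^{\otimes (-1)^i} $$
over $NG(3)$ is, after rearranging the nine factors $\varepsilon_i^* \varepsilon_j^* \widehat{G}$ using the simplicial identities, canonically the trivial bundle: each factor $\varepsilon_k^* \widehat{G}$ (for the appropriate composite $NG(3) \to G$) appears the same number of times with a plus sign as with a minus sign, so the tensor factors cancel in pairs via the canonical isomorphisms $S \otimes S^{\otimes -1} \cong NG(3) \times U(1)$. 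This is the ``$\delta^2 = 0$'' phenomenon for the bundle gerbe multiplication.

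Next I would transport the connection. By Proposition 2.2 (applied on $NG(3)$), the form $\varepsilon_i^*\bigl(\hat s^*(\delta\theta)\bigr)$ is, up to the sign conventions, the connection form obtained by pulling back $\delta\theta$ along the section $\varepsilon_i^* \hat s$ of $\varepsilon_i^* \delta\widehat{G}$. Hence $\sum_i (-1)^i \varepsilon_i^*\bigl(\hat s^*(\delta\theta)\bigr)$ is the pullback, along the induced section $\bigotimes_i (\varepsilon_i^* \hat s)^{\otimes(-1)^i}$, of the induced connection on the bundle $\bigotimes_i (\varepsilon_i^* \delta\widehat{G})^{\otimes(-1)^i}$. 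But we have just identified that bundle with the canonically trivial bundle, and under that identification the induced connection is the trivial (flat product) connection $d$, while the section $\bigotimes_i (\varepsilon_i^*\hat s)^{\otimes(-1)^i}$ corresponds to some $U(1)$-valued function; the pullback of the trivial connection along any section is exact, in fact equals $\frac{1}{2\pi i}$ times the logarithmic derivative of that function. A cleaner way to close the argument is to note that Proposition 2.2 gives
$$ ({\varepsilon}_{0}^* - {\varepsilon}_{1}^* + {\varepsilon}_{2}^*)\,c_1(\theta) = \Bigl( \tfrac{-1}{2\pi i} \Bigr) d\bigl(\hat s^*(\delta\theta)\bigr), $$
and applying $d'$ on $NG(3)$ to both sides: the left-hand side is $d'$ of a pullback from $NG(1)$, and since $d'd' = 0$ on the double complex (which follows from the simplicial identities, exactly as $(\delta\widehat{G})$ being well defined used $d'd'=0$), the left side vanishes; therefore $\bigl(\tfrac{-1}{2\pi i}\bigr)\, d\bigl(({\varepsilon}_{0}^* - {\varepsilon}_{1}^* + {\varepsilon}_{2}^* - {\varepsilon}_{3}^*)\hat s^*(\delta\theta)\bigr) = 0$, so the alternating sum is a closed form. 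That alone is not yet the statement, so one must upgrade closedness to vanishing.

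To get the stronger conclusion that the alternating sum is identically zero and not merely closed, I would argue that it is the pullback of $\delta\theta$ along a section of the canonically trivial bundle above, and that this section can be chosen to be the constant section $1$: indeed the canonical trivializing section of $S \otimes S^{\otimes -1}$ (the diagonal) is parallel for the product connection, so its logarithmic derivative is zero. Concretely, the isomorphism $f$ of Lemma 2.1 gives a preferred global section of each $\delta\widehat{G}$, and with $\hat s$ taken to be that preferred section, the induced section of $\bigotimes_i (\varepsilon_i^*\delta\widehat{G})^{\otimes(-1)^i}$ is literally the canonical one, whose pullback connection form is $0$. I expect the main obstacle to be bookkeeping: carefully matching up the nine pulled-back copies of $\widehat{G}$ under the simplicial identities and checking that the preferred sections $f$ are compatible with these identifications, so that the induced section really is the canonical diagonal section rather than differing from it by a non-constant $U(1)$-valued function. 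Once that compatibility is in hand, the vanishing is immediate.
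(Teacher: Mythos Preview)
Your approach is the paper's: form $\delta(\delta\widehat{G}) = \bigotimes_{i=0}^{3}(\varepsilon_i^{*}\delta\widehat{G})^{\otimes(-1)^{i}}$ over $NG(3)$ with the induced connection $\delta(\delta\theta)$, use the simplicial identities to see that this bundle is canonically trivial with the Maurer--Cartan connection, and identify the alternating sum with the pullback of that connection along the induced section $\bigotimes_{i}(\varepsilon_i^{*}\hat s)^{\otimes(-1)^{i}}$. The paper carries out precisely the ``bookkeeping'' you flag, by writing down explicit local sections built from local sections $\eta_\lambda$ of $\widehat{G}\to G$ and checking directly that the pulled-back connection form cancels term by term; note that there are twelve factors $\varepsilon_i^{*}\varepsilon_j^{*}\widehat{G}$ ($i=0,\dots,3$, $j=0,1,2$), not nine, which pair off into the six composites $r_0,\dots,r_5$. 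Your detour through $d'd'=0$ to get mere closedness does not contribute to the vanishing and can be dropped.
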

\begin{proof}
We consider the $U(1)$-bundle $\delta (\delta \widehat{G})$ over $NG(3)=G \times G \times G$ and the induced connection $\delta (\delta \theta )$ on it.
Composing $  \{{\varepsilon}_{i} \}: NG(3) \rightarrow NG(2)$ and $  \{{\varepsilon}_{i} \}: NG(2) \rightarrow G$, we define the maps $\{ r_i\} _{i=0,1, \cdots ,5} :NG(3) \rightarrow G$ as follows.
$$ r_0 = {\varepsilon}_{0} \circ {\varepsilon}_{1}= {\varepsilon}_{0} \circ {\varepsilon}_{0} \thinspace , \quad r_1 = {\varepsilon}_{0} \circ {\varepsilon}_{2}= {\varepsilon}_{1} \circ {\varepsilon}_{0} \thinspace
, \quad r_2 = {\varepsilon}_{0} \circ {\varepsilon}_{3}= {\varepsilon}_{2} \circ {\varepsilon}_{0}$$
$$ r_3 = {\varepsilon}_{1} \circ {\varepsilon}_{2}= {\varepsilon}_{1} \circ {\varepsilon}_{1} \thinspace , \quad r_4 = {\varepsilon}_{1} \circ {\varepsilon}_{3}= {\varepsilon}_{2} \circ {\varepsilon}_{1} \thinspace
, \quad r_5 = {\varepsilon}_{2} \circ {\varepsilon}_{3}= {\varepsilon}_{2} \circ {\varepsilon}_{2}$$
Then $\{ \bigcap r^{-1} _i (V_{{\lambda} ^{(i)}} ) \}$ is a covering of $NG(3)$. Since each $ \bigcap r^{-1} _i (V_{{\lambda} ^{(i)}} ) $
is equal to
$${\varepsilon}_{0} ^{-1}({\varepsilon}_{0} ^{-1} (V_{\lambda} ) \cap {\varepsilon}_{1} ^{-1} (V_{\lambda '} ) \cap {\varepsilon}_{2} ^{-1} (V_{\lambda ''}))
 \cap {\varepsilon}_{1} ^{-1}({\varepsilon}_{0} ^{-1} (V_{\lambda} ) \cap {\varepsilon}_{1} ^{-1} (V_{\lambda ^{(3)}} ) \cap {\varepsilon}_{2} ^{-1} (V_{\lambda ^{(4)}}))$$
$$\cap {\varepsilon}_{2} ^{-1}({\varepsilon}_{0} ^{-1} (V_{\lambda '} ) \cap {\varepsilon}_{1} ^{-1} (V_{\lambda ^{(3)}} ) \cap {\varepsilon}_{2} ^{-1} (V_{\lambda ^{(5)}}))
 \cap {\varepsilon}_{3} ^{-1}({\varepsilon}_{0} ^{-1} (V_{\lambda ''} ) \cap {\varepsilon}_{1} ^{-1} (V_{\lambda ^{(4)}} ) \cap {\varepsilon}_{2} ^{-1} (V_{\lambda ^{(5)}}))$$ 
there are the following induced local sections on that.
$${\varepsilon}_{0} ^*({\varepsilon}_{0} ^* \eta _{\lambda} \otimes ({\varepsilon}_{1} ^* \eta _{\lambda '})^{\otimes -1} \otimes {\varepsilon}_{2} ^* \eta _{\lambda ''}) \otimes {\varepsilon}_{1} ^*({\varepsilon}_{0} ^* \eta _{\lambda} \otimes ({\varepsilon}_{1} ^* \eta _{\lambda ^{(3)}})^{\otimes -1} \otimes {\varepsilon}_{2} ^* \eta _{\lambda ^{(4)}})^{\otimes -1}$$
$$\otimes {\varepsilon}_{2} ^*({\varepsilon}_{0} ^* \eta _{\lambda '} \otimes ({\varepsilon}_{1} ^* \eta _{\lambda ^{(3)}})^{\otimes -1} \otimes {\varepsilon}_{2} ^* \eta _{\lambda ^{(5)}}) \otimes {\varepsilon}_{3} ^*({\varepsilon}_{0} ^* \eta _{\lambda ''} \otimes ({\varepsilon}_{1} ^* \eta _{\lambda ^{(4)}})^{\otimes -1} \otimes {\varepsilon}_{2} ^* \eta _{\lambda ^{(5)}})^{\otimes -1}.$$
From direct computations we can check that the pull-back of $\delta (\delta \theta )$
by this section is equal to 0. This means $\delta (\delta \theta )$ is the Maurer-Cartan connection. Hence if we pull back $\delta (\delta \theta )$ by the induced section ${\varepsilon}_{0} ^* \hat{s} \otimes ({\varepsilon}_{1} ^* \hat{s})^{\otimes -1} \otimes {\varepsilon}_{2} ^* \hat{s} \otimes ({\varepsilon}_{3} ^* \hat{s})^{\otimes -1}$, it is also equal to 0 and this pull-back is nothing but $({\varepsilon}_{0} ^* - {\varepsilon}_{1} ^* + {\varepsilon}_{2} ^* - {\varepsilon}_{3} ^*)(\hat{s}^{*} (\delta \theta ))$.
\end{proof}
\par

\rm{The propositions above give the cocycle} $c_1 (\theta ) - \left( \frac{-1}{2 \pi i} \right)\hat{s}^{*} (\delta \theta ) \in \Omega ^3 (NG)$ below.

$$
\begin{CD}
0 \\
@AA{d}A \\
c_1 ( \theta ) \in {\Omega}^{2} (G )@>{{\varepsilon}_{0} ^* - {\varepsilon}_{1} ^* + {\varepsilon}_{2} ^* }>>{\Omega}^{2} (G \times G)\\
@.@AA{-d}A\\
@.-\left( \frac{-1}{2 \pi i} \right) \hat{s}^{*} (\delta \theta ) \in {\Omega}^{1} (G \times G)@>{{\varepsilon}_{0} ^* - {\varepsilon}_{1} ^* + {\varepsilon}_{2} ^* - {\varepsilon}_{3} ^*}>> 0
\end{CD}
$$

\begin{proposition}
{The cohomology class} $[c_1 (\theta ) - \left( \frac{-1}{2 \pi i} \right)\hat{s}^{*} (\delta \theta )] \in H^3 (\Omega  (NG))$ does not depend on $\theta$.
\end{proposition}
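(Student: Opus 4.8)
\emph{Proof proposal.} The plan is to compare the cocycle built from a connection $\theta_1$ on $\widehat{G}$ with the one built from a second connection $\theta_0$, using one and the same global section $\hat{s}$ of $\delta\widehat{G}$ in both cases; this is legitimate because the bundle $\delta\widehat{G}\rightarrow NG(2)$ does not involve the connection. Since $\theta_0$ and $\theta_1$ are connections on the same principal $U(1)$-bundle $\rho:\widehat{G}\rightarrow G$, the difference $\theta_1-\theta_0$ is horizontal and invariant, so $\theta_1-\theta_0=\rho^{*}a$ for a unique $a\in\Omega^{1}(G)$.

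First I would compute how the two pieces of the cocycle change. From $\rho^{*}c_1(\theta)=\left( \frac{-1}{2 \pi i} \right)d\theta$ and the injectivity of $\rho^{*}$ on forms, $c_1(\theta_1)-c_1(\theta_0)=\left( \frac{-1}{2 \pi i} \right)da\in\Omega^{2}(G)$. For the second piece, recall that the connection induced on a pullback bundle, on a tensor product of $U(1)$-bundles, and on an inverse bundle is respectively the pullback, the sum, and the negative of the given connection(s); consequently, under $\theta\mapsto\theta+\rho^{*}a$ the induced connection $\delta\theta$ changes by the pullback, along the projection $\pi:\delta\widehat{G}\rightarrow NG(2)$, of the form $({\varepsilon}_{0}^{*}-{\varepsilon}_{1}^{*}+{\varepsilon}_{2}^{*})a\in\Omega^{1}(NG(2))$. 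Applying $\hat{s}^{*}$ and using $\pi\circ\hat{s}=\mathrm{id}$ then gives $\hat{s}^{*}(\delta\theta_1)-\hat{s}^{*}(\delta\theta_0)=({\varepsilon}_{0}^{*}-{\varepsilon}_{1}^{*}+{\varepsilon}_{2}^{*})a$ (alternatively, this identity follows by pulling back along the induced local sections ${\varepsilon}_{0}^{*}\eta_{\lambda}\otimes({\varepsilon}_{1}^{*}\eta_{\lambda'})^{\otimes-1}\otimes{\varepsilon}_{2}^{*}\eta_{\lambda''}$ used in the proof of Proposition~2.1).

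Putting these together, the difference of the total cocycle attached to $\theta_1$ and the one attached to $\theta_0$ equals
$$\left( \frac{-1}{2 \pi i} \right)\!\Big(da-({\varepsilon}_{0}^{*}-{\varepsilon}_{1}^{*}+{\varepsilon}_{2}^{*})a\Big).$$
It remains to recognize the right-hand side as a total coboundary. Put $b:=\left( \frac{-1}{2 \pi i} \right)a$, regarded as an element of $\Omega^{1,1}(NG)=\Omega^{1}(NG(1))$. Since $d''=-d$ in bidegree $(1,q)$ and $d'=\sum_{i=0}^{2}(-1)^{i}{\varepsilon}_{i}^{*}$ on $\Omega^{1,1}$, the total differential gives
$$(d'+d'')b=\left( \frac{-1}{2 \pi i} \right)\!\Big(({\varepsilon}_{0}^{*}-{\varepsilon}_{1}^{*}+{\varepsilon}_{2}^{*})a-da\Big),$$
which is exactly the negative of the displayed difference. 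Therefore the cocycle for $\theta_1$ and the cocycle for $\theta_0$ differ by the total coboundary of $-b$, and so they represent the same class in $H^{3}(\Omega(NG))$.

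There is no serious obstacle in this argument; the only step that deserves some care is the claim that, under $\theta\mapsto\theta+\rho^{*}a$, the induced connection $\delta\theta$ changes by the $\pi$-pullback of the form $({\varepsilon}_{0}^{*}-{\varepsilon}_{1}^{*}+{\varepsilon}_{2}^{*})a$ that already lives on $NG(2)$ — it is precisely this descent that makes the section pullback $\hat{s}^{*}$ collapse to the simplicial operator. Everything else is routine bookkeeping with the signs of the double complex.
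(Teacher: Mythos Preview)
Your argument is correct, but it follows a different route from the paper. The paper gives a homotopy-style proof: it places the affine path $t\theta_0+(1-t)\theta_1$ of connections on the bundle $\widehat{G}\times[0,1]\to G\times[0,1]$, builds the corresponding cocycle in $\Omega^{3}(NG\times[0,1])$, and then invokes the fact that the two endpoint inclusions $i_0,i_1:NG\hookrightarrow NG\times[0,1]$ induce the same map on the total cohomology. Your approach, by contrast, writes $\theta_1-\theta_0=\rho^{*}a$ and exhibits directly the cochain $\left(\frac{-1}{2\pi i}\right)a\in\Omega^{1,1}(NG)$ whose total coboundary is the difference of the two cocycles. The paper's argument is shorter and more conceptual; yours is more explicit, since one actually sees the primitive, and it avoids any appeal to homotopy invariance of the simplicial de Rham complex.
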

\begin{proof}
Suppose $\theta _0$ and $\theta _1$ are two connections on $\widehat{G}$.Consider the  $U(1)$-bundle
$ \widehat{G} \times [0,1] \rightarrow G \times [0,1] $ and the connection form $t \theta _0 + (1-t) \theta _1 $ on it. Then
we obtain the cocycle $c_1 ( t \theta _0 + (1-t) \theta _1) - \left( \frac{-1}{2 \pi i} \right)\hat{s}^{*} (\delta (t \theta _0 + (1-t) \theta _1) )$ on $ \Omega ^3 (NG \times [0,1])$. Let $i_0 : NG \times \{0 \} \rightarrow NG \times [0,1]$ and $i_1 : NG \times \{1 \} \rightarrow NG \times [0,1]$ be the natural inclusion map. When we identify $ NG \times \{0 \} $ with $NG \times \{1 \}$, $ (i_0 ^{*})^{-1}
 i_1 ^{*} :H({\Omega}^{*} (NG \times \{0 \})) \rightarrow H({\Omega}^{*} (NG \times \{1 \}))$ is the identity map.Hence $ 
[c_1 (\theta _0) - \left( \frac{-1}{2 \pi i} \right)\hat{s}^{*} (\delta \theta _0)]
=[c_1 (\theta _1) - \left( \frac{-1}{2 \pi i} \right)\hat{s}^{*} (\delta \theta _1)]$.
\end{proof}

\section{Dixmier-Douady class on the double complex}

First, we recall the definition of Dixmier-Douady classes, following \cite{Car}.
Let $\pi : Y \rightarrow M $ be a principal $G$-bundle  and  $\{ U_{\alpha } \}$ a Leray covering of $M$.
When $G$ has a central extension $\rho : \widehat{G} \rightarrow G $,
  the transition functions $ g _{\alpha \beta } : U_{\alpha \beta } \rightarrow G$ lift to $ \widehat{G} $. i.e.
there exist continuous maps $\hat{g} _{\alpha \beta } : U_{\alpha \beta } \rightarrow \widehat{G} $ such that
$\rho \circ \hat{g}_{\alpha \beta } = g _{\alpha \beta }$. This is because each $U_{\alpha \beta }$ is contractible so
the pull-back of $\rho $ by $ g _{\alpha \beta }$ has a global section.
Now the $U(1)$-valued functions $c_{\alpha \beta \gamma}$ on $U_{\alpha \beta \gamma}$ are defined
as $c_{\alpha \beta \gamma}:= \hat{g} _{ \beta \gamma }  \hat{g}^{-1} _{\alpha \gamma   } \hat{g} _{\alpha \beta } $. Note
that here they identify $ g_{ \beta \gamma } ^*  \widehat{G} \otimes ( g_{ \alpha \gamma } ^*  \widehat{G})^{\otimes -1}
\otimes g_{\alpha \beta  } ^*  \widehat{G}$ with $U_{\alpha \beta \gamma} \times U(1)$.
Then it is easily seen that $\{ c_{\alpha \beta \gamma} \}$ is a $U(1)$-valued \v{C}ech-cocycle on $M$ and 
hence define a cohomology class in $H^2 (M , \underline{U(1)} ) \cong H^3 (M , \mathbb{Z} )$.
 This class is called the Dixmier-Douady class of $Y$. 
 
Here $G$ can be infinite dimensional, 
but we require $G$ to have a partition of unity so that we can consider a connection form on the $U(1)$-bundle over $G$.
A good example which satisfies such a condition is the loop group of a finite dimensional Lie group \cite{Bry} \cite{Pre}.

Secondly, we fix any trivialization $\delta \widehat{G} \cong \widehat{G} \times U(1)$. Then since $ g_{ \beta \gamma } ^*  \widehat{G} \otimes ( g_{ \alpha \gamma } ^*  \widehat{G})^{\otimes -1} \otimes g_{\alpha \beta  } ^*  \widehat{G}$ is the pull-back of $\delta \widehat{G}$
by $ ( g_{\alpha \beta  } , g_{ \beta \gamma } ) : U_{\alpha \beta \gamma} \rightarrow G \times G $,  there is the induced trivialization $ g_{ \beta \gamma } ^*  \widehat{G} \otimes ( g_{ \alpha \gamma } ^*  \widehat{G})^{\otimes -1}
\otimes g_{\alpha \beta  } ^*  \widehat{G} \cong U_{\alpha \beta \gamma} \times U(1)$. So we have the Dixmier-Douady cocycle
by using this identification.

Now we are ready to state the main theorem.
\begin{definition}
\rm{ For the global section }$ \hat{s} : G \times G \rightarrow  1$, we call  the sum of $c_1 (\theta ) \in \Omega ^2(NG(1)) $ and $ - \left( \frac{-1}{2 \pi i}  \right)\hat{s}^{*} (\delta \theta ) \in \Omega ^1 (NG(2))$ the simplicial Dixmier-Douady cocycle associated to $ \theta $ and the trivialization $\delta \widehat{G} \cong \widehat{G} \times U(1)$.
\end{definition}
\begin{theorem}

{The} simplicial Dixmier-Douady cocycle represents the universal Dixmier-Douady class associated to $\rho$.

\end{theorem}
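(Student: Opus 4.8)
The plan is to test the simplicial Dixmier--Douady cocycle against an arbitrary principal $G$-bundle $\pi\colon Y\to M$ equipped with a Leray covering $\{U_\alpha\}$ and transition functions $g_{\alpha\beta}$, and to show that its image under the map $H^{*}(NG)\to H^{*}_{\mathrm{dR}}(M)$ induced (as recalled in Section 2) by the transition functions $(g_{\alpha_0\alpha_1},\dots,g_{\alpha_{p-1}\alpha_p})$ coincides with the image of the \v{C}ech cocycle $\{c_{\alpha\beta\gamma}=\hat{g}_{\beta\gamma}\hat{g}^{-1}_{\alpha\gamma}\hat{g}_{\alpha\beta}\}$ under the standard map $\check{H}^{2}(M,\underline{U(1)})\to H^{3}_{\mathrm{dR}}(M)$. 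Because the universal Dixmier--Douady class is pinned down by the requirement that it restrict to the Dixmier--Douady class of $Y$ for every $Y$, this comparison is exactly what needs to be proved; I will carry it out inside the \v{C}ech--de Rham double complex of $M$.

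First I would write down the pulled-back cochain $\xi$ explicitly. For $p=1$ the relevant map is $g_{\alpha\beta}\colon U_{\alpha\beta}\to NG(1)$ and for $p=2$ it is $(g_{\alpha\beta},g_{\beta\gamma})\colon U_{\alpha\beta\gamma}\to NG(2)$; since $\varepsilon_0(g_{\alpha\beta},g_{\beta\gamma})=g_{\beta\gamma}$, $\varepsilon_1(g_{\alpha\beta},g_{\beta\gamma})=g_{\alpha\beta}g_{\beta\gamma}=g_{\alpha\gamma}$ (here the \v{C}ech cocycle identity is essential) and $\varepsilon_2(g_{\alpha\beta},g_{\beta\gamma})=g_{\alpha\beta}$, the cochain $\xi$ has a de Rham-degree-$2$ component $g_{\alpha\beta}^{*}c_1(\theta)$ on the double intersections, a de Rham-degree-$1$ component $-\left(\frac{-1}{2\pi i}\right)(g_{\alpha\beta},g_{\beta\gamma})^{*}\hat{s}^{*}(\delta\theta)$ on the triple intersections, and nothing else. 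On the other side, choosing logarithms $c_{\alpha\beta\gamma}=\exp(2\pi i f_{\alpha\beta\gamma})$ on the Leray cover shows that the image of $\{c_{\alpha\beta\gamma}\}$ in the \v{C}ech--de Rham double complex is represented, up to the normalisation fixed by the $\frac{-1}{2\pi i}$ convention, by the degree-$1$ cochain $df_{\alpha\beta\gamma}=\frac{1}{2\pi i}c_{\alpha\beta\gamma}^{-1}dc_{\alpha\beta\gamma}$ on the triple intersections.

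Next I would exhibit a primitive. Since the covering is Leray each $g_{\alpha\beta}$ admits a lift $\hat{g}_{\alpha\beta}\colon U_{\alpha\beta}\to\widehat{G}$ with $\rho\hat{g}_{\alpha\beta}=g_{\alpha\beta}$; set $\mu_{\alpha\beta}=-\left(\frac{-1}{2\pi i}\right)\hat{g}_{\alpha\beta}^{*}\theta$, a bidegree $(1,1)$ cochain. Because $c_1(\theta)$ is by definition the form with $\rho^{*}c_1(\theta)=\left(\frac{-1}{2\pi i}\right)d\theta$, one gets $d''\mu=g_{\alpha\beta}^{*}c_1(\theta)$ on the double intersections, which cancels the degree-$2$ component of $\xi$, while $d'\mu=-\left(\frac{-1}{2\pi i}\right)\big(\hat{g}_{\beta\gamma}^{*}\theta-\hat{g}_{\alpha\gamma}^{*}\theta+\hat{g}_{\alpha\beta}^{*}\theta\big)$ on the triple intersections. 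Hence, modulo the coboundary of $\mu$, the cocycle $\xi$ is equivalent to the single degree-$1$ cochain on triple intersections
$$-\left(\frac{-1}{2\pi i}\right)\Big[(g_{\alpha\beta},g_{\beta\gamma})^{*}\hat{s}^{*}(\delta\theta)-\big(\hat{g}_{\beta\gamma}^{*}\theta-\hat{g}_{\alpha\gamma}^{*}\theta+\hat{g}_{\alpha\beta}^{*}\theta\big)\Big].$$
It then remains to identify the bracket with a multiple of $c_{\alpha\beta\gamma}^{-1}dc_{\alpha\beta\gamma}$. Over $U_{\alpha\beta\gamma}$ the bundle $(g_{\alpha\beta},g_{\beta\gamma})^{*}(\delta\widehat{G})$ carries two sections: the pullback of the unit section $\hat{s}$, and $\sigma_{\alpha\beta\gamma}=\hat{g}_{\beta\gamma}\otimes\hat{g}_{\alpha\gamma}^{\otimes-1}\otimes\hat{g}_{\alpha\beta}$. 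By the very definition of $c_{\alpha\beta\gamma}$ through the trivialisation of $\delta\widehat{G}$ induced on $U_{\alpha\beta\gamma}$, these two sections differ by the $U(1)$-function $c_{\alpha\beta\gamma}$; and because $\delta\theta$ is the tensor-product connection, pulling it back along $\sigma_{\alpha\beta\gamma}$ yields precisely the alternating sum $\hat{g}_{\beta\gamma}^{*}\theta-\hat{g}_{\alpha\gamma}^{*}\theta+\hat{g}_{\alpha\beta}^{*}\theta$ (this is the computation already performed, for the local sections $\eta_\lambda$, in the proof of Proposition 2.1). Comparing the pullbacks of $\delta\theta$ along the two sections then expresses the bracket as the logarithmic derivative of $c_{\alpha\beta\gamma}$, completing the chase.

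I expect the decisive step to be this last identification, where the work is essentially bookkeeping: matching the change-of-gauge formula for the $U(1)$-connection $\delta\theta$ against the \emph{definition} of $c_{\alpha\beta\gamma}$ via the induced trivialisation of $\delta\widehat{G}$ (and not some unrelated one), using the identity $\varepsilon_1(g_{\alpha\beta},g_{\beta\gamma})=g_{\alpha\gamma}$ coming from the \v{C}ech cocycle condition, and pinning down the overall constant together with the signs $d''=(-1)^{p}d$ of the double complex, so that the resulting multiple of $c_{\alpha\beta\gamma}^{-1}dc_{\alpha\beta\gamma}$ matches the standard representative of the image of $\check{H}^{2}(M,\underline{U(1)})$ in $H^{3}_{\mathrm{dR}}(M)$. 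Once that is settled, $\xi$ is cohomologous to the Dixmier--Douady class of $Y$ in the \v{C}ech--de Rham complex; since $Y$ was arbitrary (one may in particular apply this to the simplicial universal bundle $\gamma\colon N\bar{G}\to NG$), the class of the simplicial Dixmier--Douady cocycle in $H^{3}(\Omega^{*}(NG))\cong H^{3}(BG)$ is the universal Dixmier--Douady class.
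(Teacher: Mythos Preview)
Your proposal is correct and follows essentially the same route as the paper: pull back the simplicial cocycle along the transition functions into the \v{C}ech--de Rham complex, kill the bidegree $(1,2)$ part with the primitive $\left(\frac{-1}{2\pi i}\right)\hat{g}_{\alpha\beta}^{*}\theta$, and then use that the sections $(g_{\alpha\beta},g_{\beta\gamma})^{*}\hat{s}$ and $\hat{g}_{\beta\gamma}\otimes\hat{g}_{\alpha\gamma}^{\otimes-1}\otimes\hat{g}_{\alpha\beta}$ of the pulled-back $\delta\widehat{G}$ differ by $c_{\alpha\beta\gamma}$ to identify the remaining bidegree $(2,1)$ piece with $\left(\frac{-1}{2\pi i}\right)d\log c_{\alpha\beta\gamma}$. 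The paper's proof is just a terser version of exactly this argument.
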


\begin{proof}
We show that the $ [ C_{2,1} + C_{1,2}]$ below is equal to $ [ \{  \left( \frac{-1}{2 \pi i} \right) d \log c_{\alpha \beta \gamma} \} ] $ as a \v{C}ech-de Rham cohomology class of $M = \bigcup U_{\alpha } $.
\\

$$
\begin{CD}
{C_{2,1} \in \prod {\Omega}^{2} (U_{\alpha \beta })}\\
@AA{-d}A\\
{  \prod {\Omega}^{1} (U_{\alpha \beta })}@>{ \check{\delta} }>> { C_{1,2} \in \prod {\Omega}^{1} (U_{\alpha \beta \gamma}) }\\
\end{CD}
$$

$$C_{2,1} =  \{  ( {g^* _{\alpha \beta }} c_1 (\theta ) ) \}, \qquad
C_{1,2} =  \biggl\{ - \left( \frac{-1}{2 \pi i} \right) ({g _{\alpha \beta }},{g _{\beta \gamma}})^* \hat{s}^{*} (\delta \theta ) \biggr\} $$

Since $ {g^* _{\alpha \beta }} c_1 (\theta ) = {\hat{g}^* _{\alpha \beta }} \rho ^* (c_1 (\theta )) = d \left( \frac{-1}{2 \pi i} \right){\hat{g}^* _{\alpha \beta }} \theta $, we can see $[C_{2,1}+C_{1,2}]=[ \check{\delta} \{ \left( \frac{-1}{2 \pi i} \right){\hat{g}^* _{\alpha \beta }} \theta
\} +C_{1,2}]$.
 By definition 
$ ( \hat{s} \circ ({g _{\alpha \beta }},{g _{\beta \gamma}}))(p) \cdot
c_{\alpha \beta \gamma}(p) = (\hat{g} _{\beta \gamma} \otimes \hat{g}^{\otimes -1} _{\alpha \gamma } \otimes \hat{g} _{\alpha \beta } )(p)
$ for any $p \in U_{\alpha \beta \gamma} $. Hence $ ({g _{\alpha \beta }},{g _{\beta \gamma}})^* \hat{s}^{*} (\delta \theta ) + d \log c_{\alpha \beta \gamma}
=  \check{\delta} \{ \hat{g}^* _{\alpha \beta } \theta \}$.

\end{proof}

\begin{corollary}
{ If the principal} $G$-bundle over $M$ is flat, then its Dixmier-Douady class is $0$ in $H^3 (M , \mathbb{R} )$.
\end{corollary}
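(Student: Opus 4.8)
The plan is to deduce the corollary from the main theorem by observing that a flat bundle admits transition functions that are locally constant, and then tracing through the Čech–de Rham representative constructed in the proof of the theorem. First I would recall that for a flat principal $G$-bundle $Y \to M$ one can choose a Leray covering $\{U_\alpha\}$ on which the transition functions $g_{\alpha\beta} : U_{\alpha\beta} \to G$ are \emph{constant} maps; equivalently, the classifying map factors through $BG^\delta$. Then I would choose the lifts $\hat g_{\alpha\beta} : U_{\alpha\beta} \to \widehat G$ to be constant as well (each $U_{\alpha\beta}$ is contractible, so the constant value $g_{\alpha\beta}$ lifts to a point of $\widehat G$, and we take the constant map with that value).

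Next I would invoke the explicit Čech–de Rham representative exhibited in the proof of Theorem~3.1. There the Dixmier-Douady class is represented by $C_{2,1}+C_{1,2}$ with $C_{2,1} = \{g_{\alpha\beta}^* c_1(\theta)\}$ and $C_{1,2} = \{-\left(\frac{-1}{2\pi i}\right)(g_{\alpha\beta},g_{\beta\gamma})^*\hat s^*(\delta\theta)\}$. Since $g_{\alpha\beta}$ and $(g_{\alpha\beta},g_{\beta\gamma})$ are now constant maps, the pullbacks of the $2$-form $c_1(\theta)$ and of the $1$-form $\hat s^*(\delta\theta)$ along them vanish identically. Hence $C_{2,1}=0$ and $C_{1,2}=0$, so the representing cocycle is literally zero as a Čech–de Rham cochain, and therefore the Dixmier-Douady class is zero in $H^3(M,\mathbb{R})$ (equivalently, in de Rham / real cohomology). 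One should note the caveat that this is the class with real coefficients: the integral Dixmier-Douady class in $H^3(M,\mathbb{Z})$ need not vanish, only its image under $H^3(M,\mathbb{Z}) \to H^3(M,\mathbb{R})$.

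I expect the only real subtlety to be the justification that a flat bundle admits \emph{constant} transition functions on a good cover — this is standard (flatness gives a foliation transverse to the fibres, hence locally constant transition data, or one argues via the monodromy/holonomy representation $\pi_1(M) \to G$), but it is the step that actually uses the flatness hypothesis, everything after it being a one-line consequence of the explicit formula in Theorem~3.1. It would also be worth remarking that this corollary is exactly the statement anticipated in the introduction: giving $G$ the discrete topology kills the Dixmier-Douady class in $H^3(BG^\delta,\mathbb{R})$, since the universal flat bundle over $BG^\delta$ is classified by the identity and its transition functions are constant.

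\begin{proof}
Since $Y \to M$ is flat, we may choose a Leray covering $\{U_\alpha\}$ of $M$ on which the transition functions $g_{\alpha\beta} : U_{\alpha\beta} \to G$ are constant maps. As each $U_{\alpha\beta}$ is contractible, the lift $\hat g_{\alpha\beta}$ may likewise be taken to be the constant map to a chosen point of $\rho^{-1}(g_{\alpha\beta}) \subset \widehat G$. By Theorem~3.1 the Dixmier-Douady class of $Y$ is represented in \v{C}ech--de Rham cohomology by $C_{2,1}+C_{1,2}$, where
$$C_{2,1} = \{ g_{\alpha\beta}^* c_1(\theta) \}, \qquad C_{1,2} = \biggl\{ -\left(\frac{-1}{2\pi i}\right)(g_{\alpha\beta},g_{\beta\gamma})^* \hat s^*(\delta\theta) \biggr\}.$$
Because $g_{\alpha\beta}$ and $(g_{\alpha\beta},g_{\beta\gamma})$ are constant maps, the pullback of the $2$-form $c_1(\theta)$ and of the $1$-form $\hat s^*(\delta\theta)$ along them vanish identically. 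Hence $C_{2,1}=0$ and $C_{1,2}=0$, so the representing cocycle is zero and the Dixmier-Douady class is $0$ in $H^3(M,\mathbb{R})$.
\end{proof}

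\begin{remark}
Applying this to the universal flat bundle $EG^\delta \to BG^\delta$, whose transition functions are constant, shows that the Dixmier-Douady class in $H^3(BG^\delta,\mathbb{R})$ associated to $\rho$ vanishes, as announced in the introduction. Of course the integral class in $H^3(M,\mathbb{Z})$ may still be nonzero; only its image in real cohomology is killed.
\end{remark}
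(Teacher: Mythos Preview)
Your proof is correct and follows essentially the same idea as the paper: the paper's one-line argument is that ``the cocycle in Theorem 3.1 vanishes when $G$ is given a discrete topology,'' and your version simply unpacks this at the level of the specific bundle by noting that constant transition functions pull back positive-degree forms to zero. The only difference is presentational---you spell out explicitly on $M$ what the paper states universally on $BG^{\delta}$.
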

\begin{proof}
This is because the cocycle in Theorem 3.1 vanishes when $G$ is given a discrete topology. 
\end{proof}

\begin{corollary}
{If} the first Chern class of $\rho : \widehat{G} \rightarrow G $ is
not $0$ in  $H^2 (G , \mathbb{R} )$, then the corresponding Dixmier-Douady class of the universal $G$-bundle is not $0$.
\end{corollary}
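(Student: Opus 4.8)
The plan is to read the conclusion off directly from the bidegree structure of the simplicial Dixmier--Douady cocycle, using the fact that $NG(0)$ is a point. By Theorem 2.1 we may work inside $H^3(\Omega^*(NG))\cong H^3(BG;\mathbb{R})$, and by Theorem 3.1 the image in $H^3(BG;\mathbb{R})$ of the universal Dixmier--Douady class associated to $\rho$ is represented there by the total cocycle $\Phi$ whose only nonzero components are $c_1(\theta)\in\Omega^{1,2}(NG)=\Omega^2(G)$ and $-\left(\frac{-1}{2\pi i}\right)\hat s^*(\delta\theta)\in\Omega^{2,1}(NG)=\Omega^1(G\times G)$. I would also record two standard facts: $c_1(\theta)$ is closed (apply $\rho^*$, which is injective on forms since $\rho$ is a surjective submersion, to $dc_1(\theta)$), and $[c_1(\theta)]\in H^2_{dR}(G)=H^2(G;\mathbb{R})$ is exactly the real image of the first Chern class of $\rho$.

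I would then argue by contraposition. Suppose the universal Dixmier--Douady class vanishes in $H^3(BG;\mathbb{R})$; then $\Phi=D\xi$ for some $\xi$ of total degree $2$, i.e. $\xi\in\Omega^{0,2}(NG)\oplus\Omega^{1,1}(NG)\oplus\Omega^{2,0}(NG)$. This is the one place where something is really used: $NG(0)=G^0=\{\mathrm{pt}\}$, so $\Omega^{0,2}(NG)=\Omega^2(\mathrm{pt})=0$ and hence $\xi=(\beta,\gamma)$ with $\beta\in\Omega^1(G)$ and $\gamma\in C^\infty(G\times G)$. Computing $D\xi=(d'+d'')\xi$ componentwise, and noting that $d'$ raises the simplicial degree so nothing maps into bidegree $(1,\cdot)$ from $\Omega^{0,\cdot}$ (which is zero anyway), the $(1,2)$-component of $D\xi$ is simply $d''\beta=(-1)^1 d_G\beta=-d_G\beta$. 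Comparing with the $(1,2)$-component of $\Phi$ gives $c_1(\theta)=-d_G\beta$, hence $[c_1(\theta)]=0$ in $H^2(G;\mathbb{R})$, i.e. $c_1(\rho)=0$. The $(2,1)$- and $(3,0)$-components of the equation $\Phi=D\xi$ only impose further relations on $\beta$ and $\gamma$, which are not needed. Taking the contrapositive proves the corollary; since a class nonzero in $H^3(BG;\mathbb{R})$ is a fortiori nonzero in $H^3(BG;\mathbb{Z})$, the integral statement follows as well.

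There is no genuinely hard step: the argument is a bookkeeping check on the lowest nonzero bidegree of a hypothetical primitive. The only subtlety to get right is that the $(1,2)$-slot of a total coboundary of degree $2$ receives a contribution only from the $\Omega^{1,1}$-part of the primitive, which is precisely where $NG(0)=\mathrm{pt}$ (equivalently $\Omega^{0,2}(NG)=0$) is essential; without it one could not isolate $c_1(\theta)$ as an exact form on $G$, and the implication would break down.
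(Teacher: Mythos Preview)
Your argument is correct and is exactly the paper's approach, just spelled out in more detail: the paper's one-line proof simply observes that if $[c_1(\theta)]\neq 0$ in $H^2(G;\mathbb{R})$ then no $x\in\Omega^1(NG(1))=\Omega^1(G)$ can satisfy $dx=c_1(\theta)$, which is precisely your analysis of the $(1,2)$-component of a hypothetical primitive (implicitly using $\Omega^{0,2}(NG)=\Omega^2(\mathrm{pt})=0$). Your contrapositive framing and the explicit remark that $NG(0)=\mathrm{pt}$ is what makes the argument work add clarity but no new content.
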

\begin{proof}
In that situation, any differential form $x \in \Omega ^1 (NG(1))$ does not hit $c_1 (\theta ) \in \Omega ^2 (NG(1)) $ by $d:\Omega ^1 (NG(1)) \rightarrow \Omega ^2 (NG(1))$.
\end{proof}

\section{ Chern-Simons form}
As mentioned in section 2.1, $N \bar{G}$ plays the role of the
universal $G$-bundle and $NG$, the classifying space $BG$.
Then, the pull-back of the cocycle in Definition 3.1
 to $\Omega ^{*}(N \bar{G}) $ by $\gamma :N \bar{G} \rightarrow NG$ 
should be a coboundary of a cochain  on $N \bar{G}$.In this section we shall exhibit an explicit form of the 
cochain, which can be called Chern-Simons form for the Dixmier-Douady class. 

Recall $N \bar{G}(1)=G \times G $ and $\gamma : N \bar{G}(1) \rightarrow NG $ is defined as $\gamma (h_1, h_2) = h_1 h^{-1} _2 $.
Then we consider the $U(1)$-bundle $\bar{\delta}_{\gamma}\widehat{G} := {\bar{\varepsilon}_{0}}^{*}\widehat{G} \otimes {\gamma}^* \widehat{G} \otimes ({\bar{\varepsilon}_{1}}^{*}\widehat{G})^{\otimes -1}$ over $G \times G$ and the induced connection $\bar{\delta}_{\gamma} \theta $
on it. We can check $\bar{\delta}_{\gamma} \widehat{G}$ is trivial using the same argument as that in Lemma 2.1, so there is a global section
$\bar{s}_{\gamma} : G \times G \rightarrow \bar{\delta}_{\gamma} \widehat{G} $. 
\begin{theorem}
{If we take} $\bar{s} _{\gamma} =1$, the cochain $ c_1 ( \theta ) - \left( \frac{-1}{2 \pi i} \right) \bar{s}^{*} _{\gamma} (\bar{\delta}_{\gamma} \theta ) \in {\Omega}^{2} (N \bar{G} )$ is a Chern-Simons form of $c_1 (\theta ) - \left( \frac{-1}{2 \pi i} \right)\hat{s}^{*} (\delta \theta ) \in \Omega ^3 (NG)$.
$$
\begin{CD}
0 \\
@AA{d}A \\
c_1 ( \theta ) \in {\Omega}^{2} (G )@>{\bar{\varepsilon}_{0} ^* - \bar{\varepsilon}_{1} ^*  }>>{\Omega}^{2} (N \bar{G}(1))\\
@.@AA{-d}A\\
@.- \left( \frac{-1}{2 \pi i} \right) \bar{s}^{*} _{\gamma} (\bar{\delta}_{\gamma} \theta ) \in {\Omega}^{1} (N \bar{G} (1))@>{\bar{\varepsilon}_{0} ^* - \bar{\varepsilon}_{1} ^* + \bar{\varepsilon}_{2} ^* }>> {\Omega}^{1} (N \bar{G} (2))
\end{CD}
$$
\end{theorem}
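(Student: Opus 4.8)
The plan is to verify directly that the total differential $D=d'+d''$ of the proposed cochain $\eta = c_1(\theta)-\left(\tfrac{-1}{2\pi i}\right)\bar{s}^{*}_{\gamma}(\bar{\delta}_{\gamma}\theta)\in\Omega^{*}(N\bar{G})$ coincides with $\gamma^{*}$ of the simplicial Dixmier--Douady cocycle $\omega=c_1(\theta)-\left(\tfrac{-1}{2\pi i}\right)\hat{s}^{*}(\delta\theta)$ of Definition~3.1; this is exactly what it means for $\eta$ to be a Chern--Simons form of $\omega$. Since $\gamma\colon N\bar{G}(p)\to NG(p)$ preserves simplicial degree, while $\omega$ lies in bidegrees $(1,2)$ and $(2,1)$ and $\eta$ lies in bidegrees $(0,2)$ and $(1,1)$, the equation $D\eta=\gamma^{*}\omega$ decomposes into three identities: the bidegree-$(0,3)$ identity $d''c_1(\theta)=0$, which holds because $dc_1(\theta)=0$ (indeed $\rho^{*}dc_1(\theta)=\left(\tfrac{-1}{2\pi i}\right)dd\theta=0$ and $\rho^{*}$ is injective on forms); a bidegree-$(1,2)$ identity in $\Omega^{2}(N\bar{G}(1))$ whose right-hand side is $\gamma^{*}c_1(\theta)$; and a bidegree-$(2,1)$ identity in $\Omega^{1}(N\bar{G}(2))$ whose right-hand side is $\gamma^{*}\!\left(-\left(\tfrac{-1}{2\pi i}\right)\hat{s}^{*}\delta\theta\right)$. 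The substance is the last two, which are $N\bar{G}$-analogues of Propositions~2.1 and~2.2.

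For the $(1,2)$-identity I would reproduce the argument of Proposition~2.1 with $\bar{\delta}_{\gamma}\widehat{G}$ in place of $\delta\widehat{G}$. Taking a cover $\{V_{\lambda}\}$ of $G$ with local sections $\eta_{\lambda}$ of $\rho$, the induced local sections of $\bar{\delta}_{\gamma}\widehat{G}$ over the corresponding cover of $N\bar{G}(1)=G\times G$ pull $\bar{\delta}_{\gamma}\theta$ back to $\bar{\varepsilon}_{0}^{*}(\eta_{\lambda}^{*}\theta)+\gamma^{*}(\eta_{\lambda'}^{*}\theta)-\bar{\varepsilon}_{1}^{*}(\eta_{\lambda''}^{*}\theta)$. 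Since $\bar{s}_{\gamma}=1$ is a global section of the trivial bundle $\bar{\delta}_{\gamma}\widehat{G}$, it differs from each such local section by a $U(1)$-valued function, so $d\bigl(\bar{s}^{*}_{\gamma}(\bar{\delta}_{\gamma}\theta)\bigr)$ agrees locally with the exterior derivative of the displayed form; applying $\rho^{*}c_1(\theta)=\left(\tfrac{-1}{2\pi i}\right)d\theta$ then expresses it through the $\bar{\varepsilon}_{0}^{*}$-, $\gamma^{*}$- and $\bar{\varepsilon}_{1}^{*}$-pullbacks of $c_1(\theta)$, i.e.\ as the curvature of $\bar{\delta}_{\gamma}\widehat{G}$. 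Adding the $d'$-contribution of the $N\bar{G}(0)$-level term $c_1(\theta)$ (governed by the face operators $\bar{\varepsilon}_{0},\bar{\varepsilon}_{1}$ and the sign rule of Definition~2.1) then collapses the total to $\gamma^{*}c_1(\theta)$.

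For the $(2,1)$-identity I would introduce the $U(1)$-bundle $\bar{\varepsilon}_{0}^{*}(\bar{\delta}_{\gamma}\widehat{G})\otimes\bigl(\bar{\varepsilon}_{1}^{*}(\bar{\delta}_{\gamma}\widehat{G})\bigr)^{\otimes-1}\otimes\bar{\varepsilon}_{2}^{*}(\bar{\delta}_{\gamma}\widehat{G})$ over $N\bar{G}(2)$, equipped with its induced connection, and argue as in Proposition~2.2. Expanding the definition of $\bar{\delta}_{\gamma}\widehat{G}$ and using both that $\gamma$ is simplicial, so that $\varepsilon_{j}\circ\gamma=\gamma\circ\bar{\varepsilon}_{j}$, and the simplicial identities $\bar{\varepsilon}_{i}\bar{\varepsilon}_{j}=\bar{\varepsilon}_{j-1}\bar{\varepsilon}_{i}$ for $i<j$, the six ``pure $\bar{\varepsilon}$'' tensor factors cancel in three canonically trivial pairs---because $\bar{\varepsilon}_{0}\bar{\varepsilon}_{0}=\bar{\varepsilon}_{0}\bar{\varepsilon}_{1}$, $\bar{\varepsilon}_{1}\bar{\varepsilon}_{0}=\bar{\varepsilon}_{0}\bar{\varepsilon}_{2}$ and $\bar{\varepsilon}_{1}\bar{\varepsilon}_{1}=\bar{\varepsilon}_{1}\bar{\varepsilon}_{2}$ as maps $N\bar{G}(2)\to G$---with their connection terms cancelling term by term, so that the bundle-with-connection is canonically isomorphic to $\gamma^{*}(\delta\widehat{G})$ carrying $\gamma^{*}(\delta\theta)$. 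Under this isomorphism the section of the threefold tensor product induced by $\bar{s}_{\gamma}=1$ corresponds to $\gamma^{*}\hat{s}$ with $\hat{s}=1$, so pulling $\gamma^{*}(\delta\theta)$ back along it is simultaneously $(\bar{\varepsilon}_{0}^{*}-\bar{\varepsilon}_{1}^{*}+\bar{\varepsilon}_{2}^{*})\bigl(\bar{s}^{*}_{\gamma}(\bar{\delta}_{\gamma}\theta)\bigr)$ and $\gamma^{*}(\hat{s}^{*}\delta\theta)$; since $d'$ on the $N\bar{G}(1)$-level term of $\eta$ is $\bar{\varepsilon}_{0}^{*}-\bar{\varepsilon}_{1}^{*}+\bar{\varepsilon}_{2}^{*}$, this gives exactly the $(2,1)$-identity.

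I expect the combinatorics of this last step to be the main obstacle: expanding the threefold tensor product of the pullbacks of $\bar{\delta}_{\gamma}\widehat{G}$, keeping track of the six composite face maps $N\bar{G}(2)\to G$, checking that the three cancellations occur with the correct signs at the level of connection forms, and confirming that the canonical $U(1)$-trivializations on the two sides match. The $(1,2)$-identity is routine once Lemma~2.1 and Proposition~2.1 are available. (One could instead invoke $\|N\bar{G}\|\simeq EG$ together with Theorem~2.1 to see abstractly that $\gamma^{*}\omega$ must be a coboundary, but the point of Theorem~4.1 is to exhibit the explicit primitive.)
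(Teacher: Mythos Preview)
Your proposal is correct and follows essentially the same route as the paper: the paper also reduces to the two identities in bidegrees $(1,2)$ and $(2,1)$, obtaining the first by repeating the argument of Proposition~2.1 for $\bar{\delta}_{\gamma}\widehat{G}$ (yielding $(\bar{\varepsilon}_{0}^{*}+\gamma^{*}-\bar{\varepsilon}_{1}^{*})c_1(\theta)=\left(\tfrac{-1}{2\pi i}\right)d(\bar{s}_{\gamma}^{*}\bar{\delta}_{\gamma}\theta)$) and the second by noting that $\varepsilon_{j}\circ\gamma=\gamma\circ\bar{\varepsilon}_{j}$ forces $\bar{\varepsilon}_{0}^{*}\bar{\delta}_{\gamma}\widehat{G}\otimes(\bar{\varepsilon}_{1}^{*}\bar{\delta}_{\gamma}\widehat{G})^{\otimes-1}\otimes\bar{\varepsilon}_{2}^{*}\bar{\delta}_{\gamma}\widehat{G}\cong\gamma^{*}(\delta\widehat{G})$, hence $(\bar{\varepsilon}_{0}^{*}-\bar{\varepsilon}_{1}^{*}+\bar{\varepsilon}_{2}^{*})\bar{s}_{\gamma}^{*}(\bar{\delta}_{\gamma}\theta)=\gamma^{*}(\hat{s}^{*}\delta\theta)$. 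Your write-up simply makes the simplicial cancellations in the $(2,1)$ step explicit where the paper states only the conclusion.
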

\begin{proof}
Repeating the same argument as that in Proposition 2.1,
we can see $(\bar{\varepsilon}_{0} ^* + {\gamma} ^* - \bar{\varepsilon}_{1} ^* )( (c_1 (\theta ))=
\left( \frac{-1}{2 \pi i} \right)d(\bar{s}^{*} _{\gamma} (\bar{\delta}_{\gamma} \theta )) \in \Omega ^2 (N \bar{G}(1))$.
Because $({\varepsilon _0},{\varepsilon _1},{\varepsilon _2}) \circ \gamma = ( \gamma \circ \bar{\varepsilon _0},\gamma \circ \bar{\varepsilon _1},\gamma \circ \bar{\varepsilon _2})$,  $ ( \bar{\varepsilon _0}^* \bar{\delta}_{\gamma}\widehat{G} ) \otimes (\bar{{\varepsilon} _1}^* \bar{\delta}_{\gamma}\widehat{G})^{{\otimes}-1} \otimes
({\bar{\varepsilon} _2}^*  \bar{\delta}_{\gamma} \widehat{G} )$ is $ \gamma ^* (\delta \widehat{G} ) $. Hence
$ ( \bar{\varepsilon}_{0} ^* - \bar{\varepsilon}_{1} ^* + \bar{\varepsilon}_{2} ^* ) \bar{s}^{*} _{\gamma} (\bar{\delta}_{\gamma} \theta )
=  \gamma ^* ( \hat{s}^{*} (\delta \theta ) )$.
\end{proof}
 By restricting the Chern-Simons form on $\Omega ^{*}(N \bar{G}) $ to the edge $\Omega ^{*}(N \bar{G}(0)) $, we obtain the cocycle on $\Omega ^{*}(G) $. So there is the induced map of the cohomology class
$ H^* (BG) \cong H({\Omega}^{*} (N {G})) \rightarrow H^{*-1}(G)$. This map coincides with the transgression map for the universal bundle $EG \rightarrow BG$ in the sense of J. L. Heitsch and H. B. Lawson in \cite{Hei}. Hence as a corollary  of theorem 4.1, we obtain an
alternative proof of the following theorem from \cite{Car} \cite{Ste}.
\begin{theorem}
{The} transgression map of the universal bundle  $EG \rightarrow BG$ maps the Dixmier-Douady class to the first Chern class of $\rho : \widehat{G} \rightarrow G $. 
\end{theorem}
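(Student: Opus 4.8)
The plan is to exploit Theorem 4.1, which already identifies $c_1(\theta) - \left(\frac{-1}{2\pi i}\right)\bar{s}^*_\gamma(\bar{\delta}_\gamma\theta) \in \Omega^2(N\bar{G})$ as a Chern-Simons form, i.e.\ a cochain on $N\bar{G}$ whose coboundary is $\gamma^*$ of the simplicial Dixmier-Douady cocycle. The transgression map in the sense of Heitsch-Lawson is, by construction, obtained precisely by taking a cocycle in $\Omega^*(N\bar{G})$ that bounds the pull-back of a cocycle on $\Omega^*(NG)$ and restricting it to the edge $\Omega^*(N\bar{G}(0)) = \Omega^*(G)$; the discussion immediately preceding the theorem already asserts this identification of maps. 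So the content to verify is: restricting the Chern-Simons form to $N\bar{G}(0)$ yields exactly $c_1(\theta)$, which is by definition the first Chern class of $\rho : \widehat{G} \to G$.

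First I would spell out the restriction. The Chern-Simons cochain has two components: $c_1(\theta) \in \Omega^2(N\bar{G}(1))$ sitting in bidegree $(1,2)$, and $-\left(\frac{-1}{2\pi i}\right)\bar{s}^*_\gamma(\bar{\delta}_\gamma\theta) \in \Omega^1(N\bar{G}(1))$. But the edge component relevant to transgression lives on $N\bar{G}(0) = G$; here I would track through the double complex differentials to see that the $(0,2)$-component of the Chern-Simons form on $N\bar{G}(0)$ is forced, by the cocycle condition relating it across the face maps $\bar{\varepsilon}_i : N\bar{G}(1) \to N\bar{G}(0)$, to be a closed $2$-form on $G$ representing $c_1(\theta)$. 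Concretely, using $N\bar{G}(0) = G$ together with the simplicial structure, the single face map $\bar{\varepsilon}_0 = \bar{\varepsilon}_1$ degenerates and one reads off that the relevant form on $G$ is just $c_1(\theta)$ itself, since $\bar{s}_\gamma = 1$ kills the lower-degree contribution on the edge.

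The second step is to match this with the Heitsch-Lawson transgression. Here I would invoke the identification already stated in the text between the edge-restriction map $H^*(BG) \cong H(\Omega^*(NG)) \to H^{*-1}(G)$ and the Heitsch-Lawson transgression for $EG \to BG$; assuming that identification, applying it to the class $[c_1(\theta) - \left(\frac{-1}{2\pi i}\right)\hat{s}^*(\delta\theta)] \in H^3(\Omega^*(NG))$, which Theorem 3.1 identifies with the universal Dixmier-Douady class, produces $[c_1(\theta)] \in H^2(G)$, and $c_1(\theta)$ is by its very definition (via $\rho^*$ hitting $\left(\frac{-1}{2\pi i}\right)d\theta$) a representative of the first Chern class of $\rho$.

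The main obstacle I expect is the bookkeeping of signs and bidegrees when restricting a total-complex cocycle to the edge: one must be careful that the Chern-Simons form, though written as a sum of a $(1,2)$-piece and a $(1,1)$-piece on $N\bar{G}(1)$, correctly determines a class whose transgression is the $(0,2)$-piece on $N\bar{G}(0)$, and that the choice $\bar{s}_\gamma = 1$ is compatible with this restriction. The conceptual input — that "restrict the Chern-Simons form to the base of the universal simplicial bundle" \emph{is} the transgression — is the part one should state cleanly rather than recompute, citing \cite{Hei}; everything else is a short diagram chase.
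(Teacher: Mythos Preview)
Your overall approach is exactly that of the paper --- use Theorem~4.1, restrict the Chern--Simons cochain to the edge $N\bar{G}(0)=G$, invoke the identification with the Heitsch--Lawson transgression stated just before the theorem, and read off $c_1(\theta)$ --- but you have misread the bidegrees of the Chern--Simons form. In the diagram of Theorem~4.1, $c_1(\theta)$ already sits in $\Omega^2(G)=\Omega^2(N\bar{G}(0))$, i.e.\ in bidegree $(0,2)$, not in $\Omega^2(N\bar{G}(1))$ as you wrote; the other piece $-\left(\frac{-1}{2\pi i}\right)\bar{s}^*_\gamma(\bar{\delta}_\gamma\theta)$ is in bidegree $(1,1)$. (Recall $N\bar{G}(p)=G^{p+1}$, so $N\bar{G}(0)=G$.) Hence the edge restriction is literally $c_1(\theta)$ with no further argument needed --- your attempt to ``track through the double complex'' via an alleged degeneration $\bar{\varepsilon}_0=\bar{\varepsilon}_1$ is both unnecessary and wrong: those face maps are $(h_1,h_2)\mapsto h_2$ and $(h_1,h_2)\mapsto h_1$, certainly not equal. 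Once you correct the bidegree, your argument collapses to the paper's one-line deduction.
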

\begin{remark}
Here the meaning of the terminology ``transgression map" is different from those in \cite{Car} \cite{Ste},
but the statement is essentially same.
\end{remark}

\bigskip
\par
{\bf Acknowledgments.} \\
I am indebted to my supervisor, Professor H. Moriyoshi for enlightening discussions and good advice.I would also like to thank K. Gomi for reading earlier drafts and for helpful comments.

Graduate School of Mathematics, Nagoya University, Furo-cho, Chikusa-ku, Nagoya-shi, Aichi-ken, 464-8602, Japan. \\
e-mail: suzuki.naoya@c.mbox.nagoya-u.ac.jp

\begin{thebibliography}{99}
\bibitem{Beh}K. Behrend, P. Xu, $S^1$-bundles and gerbes over differentiable stacks, C. R. Acad. Sci. Paris S\'{e}r. I 336(2003) 163-168.
\bibitem{Beh2}K. Behrend, P. Xu, Differentiable stacks and gerbes. J.Symplectic Geom. 9 (2011), no.3, 285-341.
\bibitem{Bot2}R. Bott, H. Shulman, J. Stasheff, On the de Rham Theory of Certain Classifying Spaces, Adv. in Math. 20 (1976), 43-56.
\bibitem{Bry}J.L. Brylinski, Loop spaces, Characteristic Classes and Geometric Quantization. Birkh\"{a}user Boston, Inc.,Boston,MA, 1993.
\bibitem{Car}A.L. Carey, D. Crowley, M. K. Murray, Principal Bundles and the Dixmier-Douady Class, Communications in Mathematical Physics, 193(1):171-196, 1997.
\bibitem{Dup2}J. L. Dupont, Curvature and Characteristic Classes, Lecture Notes in Math. 640, Springer Verlag, 1978.
\bibitem{Gin}G. Ginot, M. Sti\'{e}non, $G$-gerbes, principal $2$-group bundles and characteristic classes, math.AT/08011238.
\bibitem{Hei}J. L. Heitsch, H. B. Lawson, Transgressions, Chern-Simons
invariants and the classical groups, J.Differential Geom. 9 (1974),
423-434.
\bibitem{Mos}M. Mostow, J. Perchick, Notes on Gel'fand-Fuks Cohomology and Characteristic Classes (Lectures by Bott),In Eleventh Holiday Symposium. New Mexico State University, December 1973.
\bibitem{Pre}A. Pressley, G. Segal, Loop groups, Oxford University Press, 1986.
\bibitem{Seg}G. Segal, Classifying spaces and spectral sequences, Inst.Hautes \'{E}tudes Sci.Publ.Math.No.34 1968 105-112
\bibitem{Ste}D. Stevenson, The Geometry of Bundle Gerbes, math.DG/0004117.
\bibitem{Tu}J.-L. Tu, P. Xu, C. Laurent-Gengoux,  Twisted $K$-theory of differentiable stacks, Ann. Sci. \'{E}cole Norm. 
Sup. 37 (2004)841-910.
\bibitem{Tu2}J.-L. Tu, P. Xu, Chern character for twisted $K$-theory of orbifolds, Adv.Math.207(2006), no.2, 455-483.

\end{thebibliography}
\end{document}